\def\RSsubtxt{section~}\newref{sub}{name = \RSsubtxt}}
\def\RSthmtxt{theorem~}\newref{thm}{name = \RSthmtxt}}
\def\RSlemtxt{lemma~}\newref{lem}{name = \RSlemtxt}}
\numberwithin{equation}{section}
\numberwithin{figure}{section}
\theoremstyle{plain}
\newtheorem{thm}{\protect\theoremname}
  \theoremstyle{definition}
  \newtheorem{defn}[thm]{\protect\definitionname}
  \theoremstyle{remark}
  \newtheorem{rem}[thm]{\protect\remarkname}
  \theoremstyle{plain}
  \newtheorem{lem}[thm]{\protect\lemmaname}
  \theoremstyle{plain}
  \newtheorem{cor}[thm]{\protect\corollaryname}
  \providecommand{\corollaryname}{Corollary}
  \providecommand{\definitionname}{Definition}
  \providecommand{\lemmaname}{Lemma}
  \providecommand{\remarkname}{Remark}
\providecommand{\theoremname}{Theorem}
\begin{document}

\title{On a Fixed Point Theorem for a Cyclical Kannan-type Mapping}

\author{Mitropam Chakraborty }

\address{Mitropam Chakraborty\\
Department of Mathematics\\
Visva-Bharati\\
Santiniketan 731235 \\
India}

\email{mitropam@gmail.com}

\thanks{The first author is indebted to the \textbf{UGC} (University Grants
Commissions), India for awarding him a \textbf{JRF }(Junior Research
Fellowship) during the tenure in which this paper was written.}

\author{S. K. Samanta}

\address{S. K. Samanta\\
Department of Mathematics\\
Visva-Bharati\\
Santiniketan 731235 \\
India}

\email{syamal\_123@yahoo.co.in}
\begin{abstract}
This paper deals with an extension of a recent result by the authors
generalizing Kannan's fixed point theorem based on a theorem of Vittorino
Pata. The generalization takes place via a cyclical condition.
\end{abstract}

\subjclass[2000]{Primary 47H10; Secondary 47H09.}

\keywords{Kannan maps, fixed points, cyclical conditions.}

\maketitle

\section{Introduction}

Somewhat in parallel with the renowned Banach contraction principle
(see, for instance, \cite{granas2003fixed}), Kannan's fixed point
theorem has carved out a niche for itself in fixed point theory since
its inception in 1969 \cite{kannan1969some}. Let $(X,\, d)$ be a
metric space. If we define $T\colon X\to X$ to be a \emph{Kannan
mapping} provided there exists some $\lambda\in(0,\,1)$ such that
\begin{equation}
d(Tx,\, Ty)\leq\frac{\lambda}{2}[d(x,\, Tx)+d(y,\, Ty)]\label{eq:kannan}
\end{equation}
for each $x,\, y\in X$, then Kannan's theorem essentially states
that:

Every Kannan mapping in a complete metric space has a unique fixed
point. 

To see that the two results are independent of each other, one can
turn to \cite{rhoades1977comparison}, e.g., and Subrahmanyam has
shown in \cite{subrahmanyam1975completeness} that Kannan's theorem
characterizes metric completeness, i.e.: if every Kannan mapping on
a metric has a fixed point, then that space must necessarily be complete.

Kirk et al. \cite{kirk2003fixed} introduced the so-called cyclical
contractive conditions to generalize Banach's fixed point theorem
and some other fundamental results in fixed point theory. Further
works in this aspect, viz. the cyclic representation of a complete
metric space with respect to a map, have been carried out in \cite{rus2005cyclic,puacurar2010fixed,Karapinar2011822}.
Pata in \cite{Pata2011}, however, extended Banach's result in a totally
different direction and ended up proving that if $(X,\, d)$ is a
complete metric space and $T\colon X\to X$ a map such that there
exist fixed constants $\Lambda\geq0$, $\alpha\ge1$, and $\beta\in[0,\,\alpha]$
with 
\begin{equation}
d(Tx,\, Ty)\leq(1-\varepsilon)d(x,\, y)+\Lambda\varepsilon^{\alpha}\psi(\varepsilon)[1+\left\Vert x\right\Vert +\left\Vert y\right\Vert ]^{\beta}\label{eq:Pata}
\end{equation}
for every $\varepsilon\in[0,\,1]$ and every $x,\, y\in X$ (where
$\psi\colon[0,\,1]\to[0,\,\infty]$ is an increasing function that
vanishes with continuity at zero and $\left\Vert x\right\Vert \colon=d(x,\, x_{0}),\,\forall x\in X$,
with an arbitrarily chosen $x_{0}\in X$), then $T$ has a unique
fixed point in $X$. Combining Pata's theorem and the cyclical framework,
Alghamdi et al. have next come up with a theorem of their own \cite{Alghamdi2012}. 

On the one hand, proofs of cyclic versions of Kannan's theorem were
given in \cite{rus2005cyclic} and \cite{petric2010fixed}; the present
authors, on the other hand, have already established an analogue of
Pata's result that generalizes Kannan's theorem instead \cite{Chakraborty2012first}.
Letting everything else denote the same as in \cite{Pata2011} except
for fixing a slightly more general $\beta\geq0$, we have actually
shown the following: 
\begin{thm}
\cite{Chakraborty2012first}\label{thm:our-1st} If the inequality
\begin{equation}
d(Tx,\, Ty)\leq\frac{1-\varepsilon}{2}[d(x,\, Tx)+d(y,\, Ty]+\Lambda\varepsilon^{\alpha}\psi(\varepsilon)[1+\left\Vert x\right\Vert +\left\Vert Tx\right\Vert +\left\Vert y\right\Vert +\left\Vert Ty\right\Vert ]^{\beta}\label{eq:our-1st}
\end{equation}
is satisfied $\forall\varepsilon\in[0,\,1]$ and $\forall x,\, y\in X$,
then $T$ possesses a unique fixed point 
\[
x^{*}=Tx^{*}\,(x^{*}\in X).
\]

\end{thm}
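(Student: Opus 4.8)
The plan is to mirror the strategy of Pata's original proof, adapted to the Kannan-type setting of the displayed inequality. The theorem to be proved is Theorem~\ref{thm:our-1st}, which we must establish from scratch (it is the result being stated). Let me think about how Pata's method works and how to transplant it.

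Let me reconstruct the structure. We fix an arbitrary point $x_0 \in X$ and set $\|x\| := d(x, x_0)$. Define the iterates $x_n := T^n x_0$, so $x_{n+1} = T x_n$. The goal is to show $(x_n)$ is Cauchy, its limit is a fixed point, and the fixed point is unique.

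First I would establish that the sequence of norms $c_n := \|x_n\|$ is bounded. This is the heart of Pata's argument. The idea is: plug $x = x_n$, $y = x_0$ (or some convenient choice) into the inequality~\eqref{eq:our-1st}, and use the triangle inequality to relate $\|x_{n+1}\|$ to $\|x_n\|$. Because of the Kannan structure, the right-hand side involves $d(x_n, x_{n+1})$ and $d(x_0, x_1)$ rather than $d(x_n, x_0)$ directly, so I first need a lemma bounding the consecutive distances $d(x_n, x_{n+1})$ in terms of the norms. Setting $x = x_{n-1}$, $y = x_n$ gives
\[
d(x_n, x_{n+1}) \leq \frac{1-\varepsilon}{2}[d(x_{n-1}, x_n) + d(x_n, x_{n+1})] + \Lambda \varepsilon^{\alpha}\psi(\varepsilon)[1 + K_n]^{\beta},
\]
where $K_n$ collects the relevant norms; solving for $d(x_n, x_{n+1})$ and optimizing over $\varepsilon$ should give a bound of the form $d(x_n, x_{n+1}) \lesssim \varepsilon^{\alpha}\psi(\varepsilon)(1 + c_n)^{\beta}$ plus a contraction of $d(x_{n-1},x_n)$. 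Feeding this back into the norm recursion, one derives an inequality of the shape $c_{n+1} \leq (1-\varepsilon)c_n + C\varepsilon^{\alpha}\psi(\varepsilon)(1 + c_n)^{\beta}$ (up to constants), and then one argues by the standard Pata trick: assuming $c_n$ is unbounded, pass to a subsequence realizing $c_n \to \infty$, choose $\varepsilon = \varepsilon_n \to 0$ cleverly (e.g.\ $\varepsilon_n \sim 1/c_n$), and derive a contradiction from the growth rates, using $\alpha \geq 1$, $\beta \geq 0$, and $\psi(\varepsilon)\to 0$.

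Once boundedness of $(c_n)$ is in hand, I would show $(x_n)$ is Cauchy. Set $d_n := d(x_n, x_{n+1})$; from the consecutive-distance estimate above and the now-uniform bound $c_n \le M$, choosing $\varepsilon \to 0$ forces $d_n \to 0$, and a telescoping/Cauchy argument (again optimizing $\varepsilon$ for pairs $x_m, x_n$) yields the Cauchy property. Completeness of $X$ then provides a limit $x^*$. To see $x^* = Tx^*$, apply~\eqref{eq:our-1st} with $x = x_n$, $y = x^*$, let $n\to\infty$ using continuity of $d$ and $d_n\to 0$, and then let $\varepsilon\to 0$; the right-hand side collapses, forcing $d(Tx^*, x^*) = 0$. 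For uniqueness, suppose $x^*, y^*$ are both fixed points, put $x = x^*$, $y = y^*$ in~\eqref{eq:our-1st}: the Kannan bracket vanishes (since $d(x^*, Tx^*) = d(y^*, Ty^*) = 0$), leaving $d(x^*, y^*) \le \Lambda\varepsilon^{\alpha}\psi(\varepsilon)[\cdots]^{\beta}$, and letting $\varepsilon\to 0$ gives $d(x^*, y^*) = 0$.

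The main obstacle I anticipate is the boundedness-of-norms step. In the pure Banach/Pata setting the recursion directly controls $\|x_{n+1}\|$ via $\|x_n\|$; here the Kannan self-reference means the displayed inequality naturally bounds $d(Tx, Ty)$ by \emph{increments} $d(x, Tx)$, so extra care is needed to convert increment bounds into norm bounds without circularity. Concretely, the presence of $\|Tx\| = \|x_{n+1}\| = c_{n+1}$ on the right-hand side of~\eqref{eq:our-1st} (inside the $[\,\cdots\,]^{\beta}$ bracket) means the recursion for $c_{n+1}$ features $c_{n+1}$ on both sides; I will need to absorb it, which is feasible when $\beta$ and $\varepsilon^{\alpha}\psi(\varepsilon)$ are small but requires choosing $\varepsilon$ in the right regime and handling the $\beta \ge 1$ versus $\beta < 1$ cases (using concavity/convexity of $t \mapsto (1+t)^{\beta}$) with some care. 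Getting a clean self-contained recursion $c_{n+1} \le (1-\varepsilon)c_n + (\text{lower order})$ is the technical crux; everything after it follows the Pata template closely.
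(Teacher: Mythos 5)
Your overall route (bound the orbit, show the consecutive distances vanish, deduce Cauchyness, extract the fixed point, prove uniqueness by setting both points to be fixed and letting $\varepsilon\to0$) is the right one, and the last four steps would go through essentially as you describe; they match what the paper does in Lemmas \ref{lem:limit} and \ref{lem:Cauchy} for the cyclic generalization. (The paper does not actually reprove Theorem \ref{thm:our-1st} --- it imports it from \cite{Chakraborty2012first} --- but the same machinery is visible in the proof of Theorem \ref{thm:main}, of which Theorem \ref{thm:our-1st} is the one-set case.)

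The genuine problem is the step you yourself flag as the crux. The recursion you plan to derive, $c_{n+1}\le(1-\varepsilon)c_n+C\varepsilon^{\alpha}\psi(\varepsilon)(1+c_n)^{\beta}$, is not what the Kannan-type hypothesis produces, and the Pata subsequence/optimization argument you intend to run on it is aimed at the wrong target. In Pata's Banach-type setting the leading term of the contractive condition is $(1-\varepsilon)d(x,y)$, which is why $\|x_{n+1}\|$ is controlled by $(1-\varepsilon)\|x_n\|$ and the delicate choice $\varepsilon_n\sim1/c_n$ becomes necessary. Here the leading term is $\frac{1-\varepsilon}{2}[d(x,Tx)+d(y,Ty)]$, involving only increments, and the increments are under control from the outset: taking $\varepsilon=0$ (permitted, since $\varepsilon$ ranges over the closed interval $[0,1]$, and this kills the perturbation term entirely) with $x=x_{n-1}$, $y=x_n$ gives $d(x_{n+1},x_n)\le d(x_n,x_{n-1})\le\cdots\le d(x_1,x_0)$. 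Consequently
\[
c_n=d(x_n,x_0)\le d(x_0,x_1)+d(Tx_0,\,Tx_{n-1})\le d(x_0,x_1)+\tfrac{1}{2}\bigl[d(x_0,x_1)+d(x_{n-1},x_n)\bigr]\le 2\,d(x_0,x_1),
\]
a uniform bound obtained in two lines with no optimization over $\varepsilon$, no case split on $\beta$, and nothing to absorb from the right-hand side (the bracket $[\cdots]^{\beta}$ is multiplied by $\varepsilon^{\alpha}\psi(\varepsilon)$, which vanishes at $\varepsilon=0$). This is precisely the content of the paper's Lemma \ref{lem:bounded}. As written, your plan would stall at the boundedness step, because the inequality you want to feed into the Pata contradiction argument is simply not available from the hypothesis; once you replace it with the direct $\varepsilon=0$ estimate above, the rest of your outline is sound --- the perturbation term is genuinely needed only once, to upgrade ``the increments are non-increasing'' to ``the increments tend to zero.''
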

In this article, we want to utilize \thmref{our-1st} to bridge the
gap by providing the only remaining missing link, i.e. a fixed point
theorem for cyclical contractive mappings in the sense of both Kannan
and Pata.

\section{The Main Result}

Let us start by recalling a definition which has its roots in \cite{kirk2003fixed};
we shall make use of a succinct version of this as furnished in \cite{Karapinar2011822}:
\begin{defn}
\label{defn:cyclic}\cite{Karapinar2011822} Let $X$ be a non-empty
set, $m\in\mathbb{N}$, and $T\colon X\to X$ a map. Then we say that
$\bigcup_{i=1}^{m}A_{i}$ (where $\emptyset\ne A_{i}\subset X\,\forall i\in\{1,\,2,\,\ldots\,,\, m\}$)
is a \emph{cyclic representation} of $X$ with respect to $T$ \emph{iff}
the following two conditions hold.\end{defn}
\begin{enumerate}
\item $X=\bigcup_{i=1}^{m}A_{i}$;
\item $T(A_{i})\subset A_{i+1}$ for $1\leq i\leq m-1$, and $T(A_{m})\subset A_{1}$.
\end{enumerate}
Now, let $(X,\, d)$ be a complete metric space. We have to first
assign $\psi\colon[0,\,1]\to[0,\,\infty]$ to be an increasing function
that vanishes with continuity at zero. With this, we are ready to
formulate our main result, viz.:
\begin{thm}
\label{thm:main}Let $\Lambda\geq0$, $\alpha\ge1$, and $\beta\geq0$
be fixed constants. If $A_{1},\,\ldots\,,\, A_{m}$ are non-empty
closed subsets of $X$ with $Y=\bigcup_{i=1}^{m}A_{i}$, and if $T\colon Y\to Y$
is such a map that $\bigcup_{i=1}^{m}A_{i}$ is a cyclic representation
of $Y$ with respect to $T$, then, provided the inequality 
\begin{equation}
d(Tx,\, Ty)\le\frac{1-\varepsilon}{2}[d(x,\, Tx)+d(y,\, Ty)+\Lambda\varepsilon^{\alpha}\psi(\varepsilon)[1+\left\Vert x\right\Vert +\left\Vert Tx\right\Vert +\left\Vert y\right\Vert +\left\Vert Ty\right\Vert ]^{\beta}\label{eq:us}
\end{equation}
is satisfied $\forall\varepsilon\in[0,\,1]$ and $\forall x\in A_{i},\, y\in A_{i+1}$
(where $A_{m+1}=A_{1}$ and, as in \cite{Pata2011}, $\left\Vert x\right\Vert \colon=d(x,\, x_{1}),\,\forall x\in Y$,
for an arbitrarily chosen $x_{1}\in Y$\textemdash{}a sort of ``zero''
of the space $Y$ ), $T$ has a unique fixed point $x^{*}\in\bigcap_{i=1}^{m}A_{i}$.\end{thm}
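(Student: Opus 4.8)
The plan is to mimic the classical cyclic-fixed-point strategy while leveraging Theorem~\ref{thm:our-1st} as a black box once I have exhibited a single point whose orbit collapses. First I would fix an arbitrary $x_0 \in Y$ and consider the Picard iterates $x_{n+1} = Tx_n$. Because $\bigcup_{i=1}^m A_i$ is a cyclic representation, consecutive iterates lie in consecutive sets, so $(x_n, x_{n+1}) \in A_i \times A_{i+1}$ for the appropriate index, and hence the inequality \eqref{eq:us} applies to each consecutive pair. My goal in this first stage is to show that $(x_n)$ is a Cauchy sequence. The natural device, exactly as in Pata's original argument, is to introduce the quantities $c_n = d(x_n, x_{n+1})$ and $\|x_n\|$, plug $x = x_{n-1},\, y = x_n$ into \eqref{eq:us}, and optimize over $\varepsilon \in [0,1]$. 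I expect to first establish that $\|x_n\|$ grows at most linearly (i.e. $\|x_n\| \le Kn$ for some constant), then feed this bound back into the optimized inequality to force $c_n \to 0$, and finally upgrade this to the Cauchy property by the telescoping/subsequence argument characteristic of the Pata method.

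Once $(x_n)$ is Cauchy, completeness of $X$ gives a limit $x^*$, and here the \emph{closedness} of each $A_i$ does the essential work: since the subsequence of $(x_n)$ lying in any fixed $A_i$ is infinite and converges to $x^*$, we get $x^* \in A_i$ for every $i$, whence $x^* \in \bigcap_{i=1}^m A_i$. This is the step that distinguishes the cyclic setting from the ordinary one and the reason the hypotheses demand closed sets. With $x^* \in \bigcap_i A_i$ secured, the restriction of $T$ to the nonempty closed (hence complete) set $Y' = \bigcap_{i=1}^m A_i$ maps $Y'$ into itself — for $x \in Y'$ we have $Tx \in A_{i+1}$ for every $i$, so $Tx \in \bigcap_i A_i = Y'$ — and on $Y'$ the pair $(x,Tx)$ always satisfies \eqref{eq:us} since any two points both belong to some $A_i$ and $A_{i+1}$ simultaneously. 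Thus $T|_{Y'}$ satisfies precisely the hypothesis \eqref{eq:our-1st} of Theorem~\ref{thm:our-1st} on the complete metric space $Y'$, and that theorem delivers a unique fixed point of $T$ in $Y'$.

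The only genuine subtlety is that \eqref{eq:us} as printed is bracketed so that the correction term $\Lambda\varepsilon^\alpha\psi(\varepsilon)[\cdots]^\beta$ sits \emph{inside} the factor $\tfrac{1-\varepsilon}{2}$, rather than being added outside as in \eqref{eq:our-1st}; I would reconcile this by noting that $\tfrac{1-\varepsilon}{2} \le 1$, so the cyclic inequality implies the Pata-Kannan inequality with the perturbation taken outside (at the cost of an harmless constant), which is exactly the form Theorem~\ref{thm:our-1st} consumes. For uniqueness across the whole of $Y$ I would argue that any fixed point must in fact lie in $\bigcap_i A_i$: if $Tp = p$ then $p \in A_i \Rightarrow p = Tp \in A_{i+1}$, and iterating the cyclic inclusions forces $p \in A_j$ for all $j$, so global uniqueness reduces to the uniqueness already guaranteed on $Y'$.

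I expect the main obstacle to be the first stage — proving the iterates are Cauchy. Unlike the Banach-type Pata estimate, the Kannan form bounds $d(Tx,Ty)$ by $d(x,Tx)+d(y,Ty)$ rather than by $d(x,y)$, so the recursion one obtains is on the \emph{consecutive} distances $c_n$ rather than a direct contraction, and the linear-growth control of $\|x_n\|$ must be set up carefully so that the $\psi(\varepsilon)$ term can be driven to zero by the right choice of $\varepsilon = \varepsilon_n$ depending on $n$. Getting these two interlocking estimates to close — a linear bound on the norms feeding a vanishing bound on $c_n$ — is where the real computation lies, and it is essentially a transcription of the Pata-type asymptotic analysis already carried out for Theorem~\ref{thm:our-1st}, now adapted to the cyclic indexing.
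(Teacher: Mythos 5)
Your overall architecture coincides with the paper's: Picard iterates, Cauchy property, closedness of the $A_i$ forcing the limit into $\bigcap_i A_i$, and then Theorem~\ref{thm:our-1st} applied to the restriction of $T$ to the complete space $\bigcap_i A_i$ (your observations that $T$ maps $\bigcap_i A_i$ into itself, that any two points of the intersection lie in consecutive sets so that \eqref{eq:us} applies unrestrictedly there, and that every fixed point of $T$ is forced into the intersection by the cyclic inclusions are all exactly the ingredients the paper uses or needs). The divergence is in how you propose to execute the first stage. You plan to prove only a \emph{linear} growth bound $\|x_n\|\le Kn$ and then drive the correction term to zero by a Pata-style choice of $\varepsilon=\varepsilon_n$; the paper instead proves that $\|x_n\|$ is \emph{bounded}, and in this Kannan setting that is essentially free: taking $\varepsilon=0$ in \eqref{eq:us} makes the consecutive distances $d(x_{n+1},x_n)$ non-increasing, and then, writing $n\equiv k\pmod m$, one reaches $x_n$ from $x_1$ by at most $m-2$ triangle-inequality steps plus a single application of \eqref{eq:us} to the legitimately consecutive pair $x_{k-2}\in A_{k-2}$, $x_{n-1}\in A_{k-1}$, yielding $\|x_n\|\le(m-1)\,d(x_2,x_1)$ uniformly in $n$. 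With that uniform bound the constant multiplying $\varepsilon^\alpha\psi(\varepsilon)$ is independent of $n$, and both the proof that $d(x_{n+1},x_n)\to0$ and the Cauchy argument reduce to letting $n\to\infty$ first and $\varepsilon\to0^+$ second, with no diagonal coupling of $\varepsilon$ to $n$. Your linear-growth route can probably be made to close, but only with a more delicate $\psi$-dependent choice of $\varepsilon_n$, and it forgoes the simplification that the Kannan form hands you.

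One concrete caution for your Cauchy step, which you leave as ``the telescoping/subsequence argument characteristic of the Pata method'': the hypothesis \eqref{eq:us} may only be invoked for points lying in \emph{consecutive} sets $A_i$, $A_{i+1}$, so you cannot estimate $d(x_n,x_p)$ by applying \eqref{eq:us} to the pair $(x_{n-1},x_{p-1})$ for arbitrary $n,p$. The paper (following Kirk et al.) first rules out, by contradiction, the existence of $\rho>0$ realized along pairs with $n-p\equiv1\pmod m$ (for which $x_{n-1}$ and $x_{p-1}$ do sit in consecutive sets), and then bridges a general pair of indices using at most $m$ additional consecutive-difference terms. The paper explicitly flags the unrestricted application of the cyclic inequality to non-consecutive iterates as the error committed in Alghamdi et al.; make sure your telescoping does not reproduce it.
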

\begin{rem}
\label{rem:Since-we-can}Since we can always redefine $\Lambda$ to
keep (\ref{eq:us}) valid no matter what initial $x_{1}\in X$ we
choose, we are in no way restricting ourselves by choosing that $x_{1}$
as our ``zero'' instead of a generic $x\in X$ \cite{Pata2011}.
\end{rem}

\subsection*{Proofs}

For the sake of brevity and clarity both, we shall henceforth exploit
the following notation when $j>m$: 
\[
A_{j}\colon=A_{i}\textrm{,}
\]
where $i\equiv j\pmod m$ and $1\leq i\leq m$. 

Let's begin by choosing our zero from $A_{1}$, i.e., we fix $x_{1}\in A_{1}$.
Starting from $x_{1}$, we then introduce the sequence of Picard iterates
\begin{align*}
x_{n} & =Tx_{n-1}=T^{n-1}x_{1} & (n\ge2).
\end{align*}
Also, let 
\begin{align*}
c_{n}\colon & =\left\Vert x_{n}\right\Vert  & (n\in\mathbb{N}).
\end{align*}
With the assumption that $x_{n}\ne x_{n+1}\,\forall n\in\mathbb{N}$,
(\ref{eq:us}) gives us 
\begin{align*}
d(x_{n+1},\, x_{n}) & =d(Tx_{n},\, Tx_{n-1})\\
 & \le\frac{1}{2}[d(x_{n+1},\, x_{n})+d(x_{n},\, x_{n-1})]
\end{align*}
if we consider the case where $\varepsilon=0$. But this means that
\begin{align}
0\leq d(x_{n+1},\, x_{n}) & \le d(x_{n},\, x_{n-1})\nonumber \\
 & \leq\cdots\nonumber \\
 & \le d(x_{2},\, x_{1})\label{eq:md}\\
 & =c_{2},\nonumber 
\end{align}
whence the next result, i.e. our first lemma, is delivered: 
\begin{lem}
\label{lem:bounded}$\{c_{n}\}$ is bounded.\end{lem}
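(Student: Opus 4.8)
The plan is to exploit the cyclic structure together with the case $\varepsilon=0$ of (\ref{eq:us}), under which the Pata correction term $\Lambda\varepsilon^{\alpha}\psi(\varepsilon)[\cdots]^{\beta}$ vanishes (since $\alpha\ge1$ forces $0^{\alpha}=0$ and $\psi(0)=0$), leaving a pure Kannan displacement estimate. The whole difficulty is that, unlike in the non-cyclic setting of \thmref{our-1st}, one may \emph{not} feed an arbitrary pair $(x_{n},\, x_{1})$ into (\ref{eq:us}): the inequality is only available when its two arguments lie in consecutive sets $A_{i},\, A_{i+1}$. Because the zero $x_{1}$ sits in $A_{1}$, the only iterates that are admissible partners for $x_{1}$ are those landing in $A_{m}$ (via the wrap-around pair $(A_{m},\, A_{1})$) or in $A_{2}$.

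First I would control the subsequence of indices congruent to $1$ modulo $m$. For $n=jm$ we have $x_{jm}\in A_{m}$, so $(x_{jm},\, x_{1})$ is an admissible pair; inserting it into (\ref{eq:us}) at $\varepsilon=0$ yields $d(x_{jm+1},\, x_{2})=d(Tx_{jm},\, Tx_{1})\le\frac{1}{2}[d(x_{jm},\, x_{jm+1})+d(x_{1},\, x_{2})]$. Both displacement terms are at most $c_{2}$ by (\ref{eq:md}), so $d(x_{jm+1},\, x_{2})\le c_{2}$, and one further triangle step through $x_{2}$ gives $c_{jm+1}=d(x_{jm+1},\, x_{1})\le 2c_{2}$. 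Together with $c_{1}=0$, this bounds $c_{n}$ uniformly along every index $\equiv1\pmod m$.

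Next I would upgrade this to all $n$ by a short telescoping that never leaves a single cyclic block. Given any $n$, let $n_{0}$ be the largest index with $n_{0}\le n$ and $n_{0}\equiv1\pmod m$, so that $n-n_{0}\le m-1$. Applying (\ref{eq:md}) repeatedly along the chain $x_{n_{0}},\, x_{n_{0}+1},\,\ldots,\, x_{n}$ gives $d(x_{n},\, x_{n_{0}})\le(m-1)c_{2}$, whence
\[
c_{n}=d(x_{n},\, x_{1})\le d(x_{n},\, x_{n_{0}})+c_{n_{0}}\le(m-1)c_{2}+2c_{2}=(m+1)c_{2}.
\]
Thus $\{c_{n}\}$ is bounded (by $(m+1)c_{2}$), which is the assertion.

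I expect the crux to be the middle step, namely securing the admissible pair $(x_{jm},\, x_{1})\in A_{m}\times A_{1}$: this is the one place where the cyclic hypothesis is genuinely invoked, precisely to circumvent the unavailability of (\ref{eq:us}) on arbitrary pairs. Everything afterward is the triangle inequality feeding on the monotonicity already recorded in (\ref{eq:md}), so no further use of the Pata term (nor of the constants $\Lambda,\,\alpha,\,\beta$) is needed for boundedness.
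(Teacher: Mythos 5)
Your proof is correct, and it rests on the same two ingredients as the paper's own argument: the $\varepsilon=0$ (pure Kannan) case of (\ref{eq:us}) applied to a single admissible pair drawn from consecutive sets, plus a triangle-inequality telescope of at most $m$ steps, each step bounded by $c_{2}$ via (\ref{eq:md}). The decomposition, however, is organized in mirror image. The paper telescopes near the base point, writing $c_{n}\le\sum_{j=1}^{k-2}d(x_{j},\,x_{j+1})+d(x_{k-1},\,x_{n})$ with $n\equiv k\pmod m$, and handles the one long jump $d(x_{k-1},\,x_{n})=d(Tx_{k-2},\,Tx_{n-1})$ by feeding the admissible pair $(x_{k-2},\,x_{n-1})\in A_{k-2}\times A_{k-1}$ into the inequality; you instead make the long jump at the top end, feeding the wrap-around pair $(x_{jm},\,x_{1})\in A_{m}\times A_{1}$ into the inequality to pin $x_{jm+1}$ within $c_{2}$ of $x_{2}$, and then telescope the remaining at most $m-1$ steps from $x_{n_{0}}$ up to $x_{n}$. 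Your variant has the mild advantage of invoking the cyclic hypothesis only through the single fixed pair of sets $(A_{m},\,A_{1})$, and of sidestepping the boundary cases $k\in\{1,\,2\}$, where the paper's chain $x_{1},\,\ldots\,,\,x_{k-1}$ degenerates and its displayed term $d(Tx_{k-2},\,Tx_{k-1})$ (evidently a slip for $d(Tx_{k-2},\,Tx_{n-1})$) would need separate comment. The paper's variant yields the marginally smaller bound $(k-1)c_{2}\le(m-1)c_{2}$ versus your $(m+1)c_{2}$, which is immaterial for the lemma; either way the conclusion and its later use in Lemmas \ref{lem:limit} and \ref{lem:Cauchy} are unaffected.
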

\begin{proof}
Let $n\in\mathbb{N}$. We assume that 
\[
n\equiv k\pmod m\,(1\leq k\leq m).
\]
Since $x_{k-1}\in A_{k-1}$ and $x_{k-2}\in A_{k-2}$, using (\ref{eq:us})
with $\varepsilon=0$, 
\begin{align*}
c_{n} & =d(x_{n},\, x_{1})\\
 & =[d(x_{1},\, x_{2})+d(x_{2},\, x_{3})+\,\cdots\,+d(x_{k-2},\, x_{k-1})]+d(x_{k-1},\, x_{n})\\
 & \leq(k-2)c_{2}+d(Tx_{k-2},\, Tx_{k-1})\\
 & \leq(k-2)c_{2}+\frac{1}{2}[d(x_{k-1},\, x_{k-2})+d(x_{n},\, x_{n-1})]\\
 & \leq(k-2)c_{2}+\frac{1}{2}(c_{2}+c_{2})\\
 & =(k-1)c_{2}.
\end{align*}
And hence we have our proof.\end{proof}
\begin{rem}
One finds in \cite{Alghamdi2012} an attempt to prove the boundedness
of an analogous sequence $c_{n}$ (the notations in play there and
in the present article are virtually the same) using the cyclic contractive
condition from its main theorem (vide inequality (2.1) from theorem
2.4 in \cite{Alghamdi2012}) on two points $x_{1}\,(\in A_{1})$ and
$x_{n}\,(\in A_{n}).$ But this inequality as well as our own (\ref{eq:us})
can only be applied to points that are members of \emph{consecutive}
sets $A_{i}$ and $A_{i+1}$ for some $i\in\{1,\,\ldots\,,\, m\}$
according to their respective applicative restrictions, both of which
stem from the very definition of cyclical conditions given in \cite{kirk2003fixed}.
$x_{n}$ being the general $n$-th term of the sequence $\{x_{n}\}$
is in a \emph{general }set $A_{n}$, and, following the notational
convention agreed upon in both \cite{Alghamdi2012} and this article,
$A_{n}=A_{l}$, where $l\equiv n\pmod m$ and $1\leq l\leq m$. Since
the index $l$ need not either be succeeding or be preceding the index
$1$ in general, $x_{1}\,(\in A_{1})$ and $x_{n}\,(\in A_{l})$ need
not necessarily be members of consecutive sets as well. Hence the
justifiability of using the cyclic criterion on them is lost, and
suitable adjustments have to be made in the structure of the proving
argument. This is precisely what we have endeavoured to do in our
proof above.
\end{rem}
To return to our central domain of discourse, next we need another:
\begin{lem}
\label{lem:limit}$\lim_{n\rightarrow\infty}d(x_{n+1},\, x_{n})=0$.\end{lem}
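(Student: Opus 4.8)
The plan is to show that the sequence of consecutive distances $d(x_{n+1},x_n)$ is monotonically non-increasing and bounded below, hence convergent, and then argue that its limit must be zero. From the case $\varepsilon = 0$ of inequality (\ref{eq:us}), already exploited in (\ref{eq:md}), I have
\[
d(x_{n+1},x_n) \le d(x_n,x_{n-1}) \qquad (n \ge 2),
\]
so $\{d(x_{n+1},x_n)\}$ is a non-increasing sequence bounded below by $0$. Therefore it converges to some limit $L \ge 0$, and the entire task reduces to proving $L = 0$.

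To pin down $L$, I would now exploit the full strength of (\ref{eq:us}) by choosing $\varepsilon > 0$ rather than $\varepsilon = 0$. Writing the inequality for the pair $(x_n, x_{n-1})$ (which lie in consecutive sets $A_k$ and $A_{k-1}$, so the cyclic applicability is satisfied), I get
\[
d(x_{n+1},x_n) \le \frac{1-\varepsilon}{2}\bigl[d(x_{n+1},x_n)+d(x_n,x_{n-1})\bigr] + \Lambda \varepsilon^{\alpha}\psi(\varepsilon)\,\bigl[1 + c_n + c_{n+1} + c_{n-1} + c_n\bigr]^{\beta}.
\]
Here the crucial input is Lemma \ref{lem:bounded}: since $\{c_n\}$ is bounded, the bracketed quantity $[1 + \cdots]^{\beta}$ is bounded above by some constant, say $K$, uniformly in $n$. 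Rearranging the first two terms and using $d(x_{n+1},x_n) \le d(x_n,x_{n-1})$ to absorb them, I expect to reach an estimate of the shape
\[
d(x_{n+1},x_n) \le (1-\varepsilon)\,d(x_n,x_{n-1}) + \Lambda\,\varepsilon^{\alpha}\psi(\varepsilon)\,K
\]
after simplification, valid for every $\varepsilon \in [0,1]$ and every $n$.

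Passing to the limit $n \to \infty$ in this last inequality, both $d(x_{n+1},x_n)$ and $d(x_n,x_{n-1})$ tend to $L$, yielding
\[
L \le (1-\varepsilon)\,L + \Lambda\,\varepsilon^{\alpha}\psi(\varepsilon)\,K,
\]
that is, $\varepsilon L \le \Lambda\,\varepsilon^{\alpha}\psi(\varepsilon)\,K$, and hence $L \le \Lambda\,\varepsilon^{\alpha-1}\psi(\varepsilon)\,K$ for every $\varepsilon \in (0,1]$. Now I would let $\varepsilon \to 0^{+}$: since $\alpha \ge 1$ the factor $\varepsilon^{\alpha-1}$ stays bounded, and since $\psi$ vanishes with continuity at zero the right-hand side tends to $0$. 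This forces $L \le 0$, and combined with $L \ge 0$ we conclude $L = 0$, which is exactly the assertion of the lemma.

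The main obstacle I anticipate is the algebraic rearrangement in the second step: one must carefully move the $d(x_{n+1},x_n)$ term from the right side to the left without dividing by a quantity that could cause trouble, and the monotonicity $d(x_{n+1},x_n) \le d(x_n,x_{n-1})$ has to be invoked at just the right moment so that the coefficient of $d(x_n,x_{n-1})$ comes out as $(1-\varepsilon)$ rather than something that obstructs the final limiting argument. The role of $\alpha \ge 1$ — ensuring $\varepsilon^{\alpha-1}$ does not blow up as $\varepsilon \to 0$ — together with the vanishing of $\psi$ at zero is what ultimately delivers $L=0$, so the estimate must be arranged to isolate precisely a factor of $\varepsilon$ on the left.
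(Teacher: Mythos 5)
Your proposal is correct and follows essentially the same route as the paper: monotonicity from the $\varepsilon=0$ case gives convergence to some $L\ge 0$, Lemma \ref{lem:bounded} makes the perturbation term uniformly $O(\varepsilon^{\alpha}\psi(\varepsilon))$, and passing to the limit in $n$ and then dividing by $\varepsilon$ and letting $\varepsilon\to 0^{+}$ forces $L=0$. The only cosmetic difference is that the paper absorbs $\varepsilon^{\alpha}\le\varepsilon$ into the constant before dividing, whereas you keep $\varepsilon^{\alpha-1}$ explicit; both are valid.
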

\begin{proof}
\emph{(\ref{eq:md}) }assures that we end up with a sequence, viz.
$\{d(x_{n+1},\, x_{n})\}$, that is both monotonically decreasing
and bounded below, and, therefore, 
\begin{align*}
\lim_{n\rightarrow\infty}d(x_{n+1},\, x_{n})=\inf_{n\in\mathbb{N}}d(x_{n+1},\, x_{n}) & =r\,\textrm{(say)}\\
 & \geq0.
\end{align*}

But, from (\ref{eq:us}), 
\begin{align*}
r & \leq d(x_{n+1},\, x_{n})\\
 & =d(Tx_{n},\, Tx_{n-1})\\
 & \leq\frac{1-\varepsilon}{2}[d(x_{n+1},\, x_{n})+d(x_{n},\, x_{n-1})]+\Lambda\varepsilon^{\alpha}\psi(\varepsilon)(1+c_{n+1}+2c_{n}+c_{n-1})^{\beta}\\
 & \leq\frac{1-\varepsilon}{2}[d(x_{n+1},\, x_{n})+d(x_{n},\, x_{n-1})]+K\varepsilon\psi(\varepsilon)
\end{align*}
for some $K\geq0$. (By virtue of \lemref{bounded}, it is ensured
that $K$ does not depend on $n$.) Letting $n\rightarrow\infty$,
\begin{align*}
 & r\leq\frac{1-\varepsilon}{2}(r+r)+K\varepsilon\psi(\varepsilon)\\
\implies & r\leq K\psi(\varepsilon) & \forall\varepsilon\in(0,\,1]\\
\implies & r=0.
\end{align*}

Therefore, 
\[
\lim_{n\rightarrow\infty}d(x_{n+1},\, x_{n})=0.
\]

\end{proof}
With this, we are now in a position to derive:
\begin{lem}
\label{lem:Cauchy}$\{x_{n}\}$ is a Cauchy sequence.\end{lem}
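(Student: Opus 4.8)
The plan is to argue by contradiction along the usual Pata/Kannan lines, while respecting the fact---emphasized in the discussion above---that (\ref{eq:us}) may be applied only to a pair of points drawn from \emph{consecutive} sets $A_i$, $A_{i+1}$. Suppose $\{x_n\}$ fails to be Cauchy. Then there is a $\delta>0$ together with indices $m_k<n_k$, both tending to infinity, for which $d(x_{m_k},\,x_{n_k})\ge\delta$.

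Before estimating, I would realign the indices with the cyclic structure. Since $x_1\in A_1$ and $T(A_i)\subset A_{i+1}$, one has $x_j\in A_j$ under the mod-$m$ convention fixed above; consequently (\ref{eq:us}) is available for $d(x_p,\,x_q)=d(Tx_{p-1},\,Tx_{q-1})$ exactly when $x_{p-1}$ and $x_{q-1}$ sit in consecutive sets, i.e. when $q-p\equiv1\pmod m$. Accordingly, for each $k$ I replace $n_k$ by the least index $n_k'\ge n_k$ satisfying $n_k'\equiv m_k+1\pmod m$. Then $0\le n_k'-n_k<m$, so \lemref{limit} and the triangle inequality give $d(x_{n_k},\,x_{n_k'})\to0$, whence $d(x_{m_k},\,x_{n_k'})\ge\delta/2$ for all large $k$.

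Now $x_{m_k-1}\in A_i$ and $x_{n_k'-1}\in A_{i+1}$ for a suitable $i$, so (\ref{eq:us}) yields
\begin{align*}
\frac{\delta}{2} &\le d(x_{m_k},\,x_{n_k'})=d(Tx_{m_k-1},\,Tx_{n_k'-1})\\
&\le\frac{1-\varepsilon}{2}[d(x_{m_k-1},\,x_{m_k})+d(x_{n_k'-1},\,x_{n_k'})]+\Lambda\varepsilon^{\alpha}\psi(\varepsilon)(1+c_{m_k-1}+c_{m_k}+c_{n_k'-1}+c_{n_k'})^{\beta}.
\end{align*}
Fixing $\varepsilon\in(0,\,1]$ and letting $k\to\infty$, the two one-step distances vanish by \lemref{limit}, while the parenthetic term stays below a constant by \lemref{bounded}; hence $\delta/2\le\Lambda\varepsilon^{\alpha}\psi(\varepsilon)\,M$ for some $M\ge0$ free of $k$ and $\varepsilon$. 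Since this holds for every $\varepsilon\in(0,\,1]$, letting $\varepsilon\to0^{+}$ and using that $\psi$ vanishes with continuity at zero forces $\delta/2\le0$, a contradiction. Therefore $\{x_n\}$ is Cauchy.

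I expect the index realignment to $n_k'$ to be the only genuinely delicate point: it is the step that honours the cyclic restriction on (\ref{eq:us}) and thereby sidesteps exactly the gap criticized earlier, since the raw pair $x_{m_k},\,x_{n_k}$ need not lie in adjacent sets. Once the pair is corrected, the remainder is the standard Kannan--Pata limiting argument, made especially clean here because the dominant term of (\ref{eq:us}) involves only the one-step distances $d(x_{m_k-1},\,x_{m_k})$ and $d(x_{n_k'-1},\,x_{n_k'})$, both of which already tend to $0$.
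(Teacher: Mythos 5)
Your proof is correct and follows essentially the same strategy as the paper's: both arguments apply (\ref{eq:us}) only to pairs whose predecessors lie in consecutive sets (index difference $\equiv1\pmod m$) and handle arbitrary index gaps by inserting at most $m-1$ single steps controlled by \lemref{limit}. The only difference is organizational---you run a single contradiction with a realigned pair $x_{m_k},x_{n_k'}$, whereas the paper first establishes the estimate $d(x_n,x_p)\le\varepsilon/m$ for the special pairs and then extends to general pairs by the triangle inequality.
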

\begin{proof}
This proof has the same generic character as the one given in \cite{kirk2003fixed}.
We suppose, first, that $\exists\rho>0$ such that, given any $N\in\mathbb{N}$,
$\exists n>p\geq N$ with $n-p\equiv1\pmod m$ and 
\[
d(x_{n},\, x_{p})\geq\rho>0.
\]
Clearly, $x_{n-1}$ and $x_{p-1}$ lie in different but consecutively
labelled sets $A_{i}$ and $A_{i+1}$ for some $i\in\{1,\,\ldots\,,\, m\}$.
Then, from (\ref{eq:us}), $\forall\varepsilon\in[0,\,1]$, 
\begin{align*}
d(x_{n},\, x_{p}) & \leq\frac{1-\varepsilon}{2}[d(x_{n},\, x_{n-1})+d(x_{p},\, x_{p-1})]+\Lambda\varepsilon^{\alpha}\psi(\varepsilon)(1+c_{n}+c_{n-1}+c_{p}+c_{p-1})^{\beta}\\
 & \leq\frac{1-\varepsilon}{2}[d(x_{n},\, x_{n-1})+d(x_{p},\, x_{p-1})]+C\varepsilon^{\alpha}\psi(\varepsilon),
\end{align*}
where, to be precise, $C=\sup_{j\in\mathbb{N}}\Lambda(1+4c_{j})^{\beta}<\infty$
(on account of \lemref{bounded}) this time. If we let $n,\, p\rightarrow\infty$
with $n-p\equiv1\pmod m$, then \lemref{limit} gives us that 
\[
0<\rho\leq d(x_{n},\, x_{p})\rightarrow0
\]
as $\varepsilon\rightarrow0+$, which is, again, contrary to what
we had supposed earlier.

Therefore, we can safely state that, given $\varepsilon>0$, $\exists N\in\mathbb{N}$
such that 
\begin{equation}
d(x_{n},\, x_{p})\leq\frac{\varepsilon}{m}\label{eq:SortofCauchy}
\end{equation}
whenever $n,\, p\geq N$ and $n-p\equiv1\pmod m$. 

Again, by \lemref{limit} it is possible to choose $M\in\mathbb{N}$
so that 
\[
d(x_{n+1},\, x_{n})\leq\frac{\varepsilon}{m}
\]
if $n\geq M$. If we now let $n,\, p\geq\max\{N,\, M\}$ with $n>p$,
then $\exists r\in\{1,\,2,\,\ldots\,,\, m\}$ such that 
\[
n-p\equiv r\pmod m.
\]
Thus 
\[
n-p+i\equiv1\pmod m,
\]
where $i=m-r+1$. And, bringing into play (\ref{eq:SortofCauchy}),
\begin{align*}
d(x_{n},\, x_{p}) & \leq d(_{p},\, x_{n+i})+[d(x_{n+i},\, x_{n+i-1})+\,\cdots\,+d(x_{n+1},\, x_{n})]\\
 & \leq\varepsilon.
\end{align*}
This proves that $\{x_{n}\}$ is Cauchy.
\end{proof}
Now, looking at $Y=\bigcup_{i}A_{i}$, a complete metric space on
its own, we can conclude straightaway that $\{x_{n}\}$, a Cauchy
sequence in it, converges to a point $y\in Y$. 

But $\{x_{n}\}$ has infinitely many terms in each $A_{i},\, i\in\{1,\,\ldots\,,\, m\}$,
and each $A_{i}$ is a closed subset of $Y$. Therefore, 
\begin{align*}
 & y\in A_{i}\,\forall i\\
\implies & y\in\bigcap_{i=1}^{m}A_{i}\\
\implies & \bigcap_{i=1}^{m}A_{i}\neq\emptyset.\\
\end{align*}
Moreover, $\bigcap_{i=1}^{m}A_{i}$ is, just as well, a complete metric
space per\emph{ }se\emph{.} Thus, considering the restricted mapping
\[
U\colon=T\restriction_{\bigcap A_{i}}\colon\bigcap A_{i}\to\bigcap A_{i},
\]
we notice that it satisfies the criterion to be a Kannan-type generalized
map already proven by us to have a unique fixed point $x^{*}\in\bigcap A_{i}$
by virtue of \thmref{our-1st}. \qed 
\begin{rem}
We have to minutely peruse a certain nuance here for rigour's sake:
the moment we know that $\bigcap A_{i}\neq\emptyset$, we can choose
an arbitrary $y_{1}\in\bigcap A_{i}$ to serve as its zero, and the
restriction of $T$ to $\bigcap A_{i}$ can still be made to satisfy
(a modified form of) (\ref{eq:us}) insofar as $\Lambda$ can be appropriately
revised as per remark \remref{Since-we-can}; this renders the employment
of \thmref{our-1st} in the proof vindicated.
\end{rem}

\section{Some Conclusions}

Following the terminology of \cite{rus2003picard}, we can actually
show something more, viz.:
\begin{cor}
$T$ is a Picard operator, i.e. $T$ has a unique fixed point $x^{*}\in\bigcap_{i=1}^{m}A_{i}$,
and the sequence of Picard iterates $\{T^{n}x\}_{n\in\mathbb{N}}$
converges to $x^{*}$ irrespective of our initial choice of $x\in Y$. \end{cor}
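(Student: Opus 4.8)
The plan is to reduce the entire statement to showing that, for an \emph{arbitrary} starting point $x\in Y$, the Picard orbit $u_{n}:=T^{n}x$ converges to the fixed point $x^{*}$ already manufactured in \thmref{main}; uniqueness is inherited verbatim from that theorem, so nothing new is needed on that front. The structural fact I would exploit is that $x^{*}\in\bigcap_{i=1}^{m}A_{i}$ lies in \emph{every} set of the cyclic decomposition, so it sits in a set consecutive to whichever $A_{i}$ happens to contain a given iterate $u_{n}$. This is exactly what lets me apply (\ref{eq:us}) directly to the pair $(u_{n-1},x^{*})$, rather than having to re-derive the existence of a limit and only afterwards identify it.

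Before the main comparison I would record two facts about the orbit $\{u_{n}\}$ emanating from $x\in A_{j}$. First, that $\{\lVert u_{n}\rVert\}$ is bounded: this follows by re-running the argument of \lemref{bounded}, the only adjustment being that the telescoping must now start from $x$ rather than from $x_{1}$. The cleanest way to arrange this is to invoke \remref{Since-we-can} and temporarily take $x$ itself as the ``zero'' of the norm; then $u_{0}=x$ plays precisely the role that $x_{1}\in A_{1}$ played originally, and a cyclic relabelling $A_{j},A_{j+1},\dots$ makes the proof of \lemref{bounded} apply word for word. Boundedness with respect to the original zero $x_{1}$ then follows at once from $\lVert u_{n}\rVert\le d(u_{n},x)+d(x,x_{1})$. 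Second, that $d(u_{n+1},u_{n})\to 0$: the monotonicity argument leading to (\ref{eq:md}) and the $\varepsilon\to 0^{+}$ squeeze of \lemref{limit} never used the starting set, so they transfer unchanged once boundedness is in hand.

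With these in place the core estimate is short. For each $n$ we have $u_{n-1}\in A_{j+n-1}$ and $x^{*}\in A_{j+n}$, a consecutive pair, so (\ref{eq:us}) applied to $(u_{n-1},x^{*})$ together with $Tx^{*}=x^{*}$ gives
\[
d(u_{n},x^{*})=d(Tu_{n-1},Tx^{*})\le\frac{1-\varepsilon}{2}\,d(u_{n-1},u_{n})+\Lambda\varepsilon^{\alpha}\psi(\varepsilon)\bigl[1+\lVert u_{n-1}\rVert+\lVert u_{n}\rVert+2\lVert x^{*}\rVert\bigr]^{\beta}.
\]
By the boundedness step the bracket is dominated by a constant $K$ independent of $n$, so letting $n\to\infty$ and using $d(u_{n-1},u_{n})\to 0$ yields $\limsup_{n}d(u_{n},x^{*})\le K\varepsilon^{\alpha}\psi(\varepsilon)$ for every $\varepsilon\in(0,1]$. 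Sending $\varepsilon\to 0^{+}$ and using that $\psi$ vanishes at the origin forces $\limsup_{n}d(u_{n},x^{*})=0$, i.e.\ $u_{n}\to x^{*}$. Since $x\in Y$ was arbitrary, $T$ is a Picard operator.

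I expect the only genuine friction to be this boundedness step for an orbit that need not begin in $A_{1}$: the proof of \lemref{bounded} was written with the starting point and the norm-zero coinciding at $x_{1}\in A_{1}$, and one must verify that shifting both to an arbitrary $x\in A_{j}$ (via \remref{Since-we-can} and the cyclic relabelling) really leaves that argument intact. Everything afterward---the consecutive-set bookkeeping that places $x^{*}$ next to $u_{n-1}$, and the final $\varepsilon\to 0^{+}$ squeeze---is routine and mirrors the reasoning already deployed in \lemref{limit} and \lemref{Cauchy}.
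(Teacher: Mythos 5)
Your proposal is correct, but it is organized differently from the paper's proof, and the difference is worth noting. The paper's argument for the Picard property proceeds in three stages: it asserts (by relabelling) that the whole construction of \thmref{main} runs from an arbitrary $x\in A_{j}$ and again produces a Cauchy sequence converging to some $y\in\bigcap A_{i}$; it then shows $y$ is a fixed point via the triangle inequality $d(y,Ty)\le d(y,x_{n+1})+d(Tx_{n},Ty)$ together with (\ref{eq:us}) at $\varepsilon=0$ applied to the pair $(x_{n},y)$ (using $y\in\bigcap A_{i}\subset A_{k+1}$); and finally it invokes uniqueness to conclude $y=x^{*}$. You instead bypass the Cauchy/limit-identification stage entirely and estimate $d(u_{n},x^{*})$ directly against the already-known fixed point, using the same structural observation (a point of $\bigcap A_{i}$ is ``consecutive'' to every $A_{k}$) on the pair $(u_{n-1},x^{*})$ rather than $(x_{n},y)$. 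Your route is arguably cleaner: it needs only the two preparatory lemmas (boundedness and $d(u_{n},u_{n+1})\to0$) for the new orbit, not the Cauchy lemma, and it avoids by construction the \emph{petitio principii} issue the paper criticizes in \cite{Alghamdi2012}, since convergence to $x^{*}$ is proved rather than convergence to an unnamed limit that must afterwards be matched to $x^{*}$. Two small remarks: the ``friction'' you anticipate in transferring \lemref{bounded} is milder than you fear, since that lemma uses only the $\varepsilon=0$ case of (\ref{eq:us}), in which the norm (and hence the choice of zero) does not appear at all, so only \lemref{limit} needs the boundedness of $\lVert u_{n}\rVert$ with respect to some fixed reference point, which your triangle-inequality observation supplies; and in fact your final squeeze can be shortened further, since taking $\varepsilon=0$ in your displayed estimate already gives $d(u_{n},x^{*})\le\frac{1}{2}d(u_{n-1},u_{n})\to0$ without any appeal to $\psi$.
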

\begin{proof}
So far, we have already shown that for a fixed $x_{1}\in A_{1}$ \emph{one
and only one} fixed point $x^{*}$ of $T$ exists. To complete the
proof, let's first observe that the decision to let $x_{1}\in A_{1}$
at the beginning of the main proof was based partly on mere convention
and partly on an intention to develop our argument thenceforth more
or less analogously to the proof given in \cite{Alghamdi2012}; if
we would have chosen any generic $x\in Y$ instead, then, seeing as
how $Y=\bigcup A_{i}$, that $x$ would have belonged to $A_{j}$
for some $j\in\{1,\,\ldots\,,\, m\}$, and our discussion thereupon
would have differed only in some labellings, \emph{not} in its conclusion:
i.e. we would have, eventually, inferred the existence of a unique
fixed point of $T$ in $\bigcap A_{i}$. 

Next we want to demonstrate that the convergence of the Picard iterates
heads to a fixed point of $T$. To this end we recall that $\{T^{n}x_{1}\}_{n\in\mathbb{N}}=\{x_{n+1}\}_{n\in\mathbb{N}}$
converges to $y\in\bigcap A_{i}$. Our claim is that this $y$ itself
is a fixed point of $T$. This we can verify summarily: 

As $x_{n}\in A_{k}$ for some $k\in\{1,\,\ldots\,,\, m\}$ and as
$y\in\bigcap_{i=1}^{m}A_{i}\subset A_{k+1}$, 

\begin{align*}
d(y,\, Ty) & \leq d(y,\, x_{n+1})+d(x_{n+1},\, Ty)\\
 & =d(y,\, x_{n+1})+d(Tx_{n},\, Ty)\\
 & \leq d(y,\, x_{n+1})+\frac{1}{2}[d(x_{n+1},\, x_{n})+d(y,\, Ty)]
\end{align*}
for every $n\in\mathbb{N}$, using (\ref{eq:us}) with $\varepsilon=0$
again, and, from that,
\[
\frac{1}{2}d(y,\, Ty)\leq d(y,\, x_{n+1})+\frac{1}{2}d(x_{n+1},\, x_{n})
\]
for all $n$. Letting $n\rightarrow\infty$, 
\[
d(y,\, Ty)=0
\]
since $x_{n+1}\rightarrow y$ and $d(x_{n+1},\, x_{n})\rightarrow0$.
Therefore,
\[
y=Ty.
\]

As observed, the choice of the starting point $x_{1}$ is irrelevant,
and we already know that $x^{*}\in\bigcap A_{i}$ is \emph{the} unique
fixed point of $T$. So obviously,
\[
x^{*}=y,
\]
i.e. $T^{n}x_{1}\rightarrow x^{*}$ as $n\rightarrow\infty$. 

This completes the proof.\end{proof}
\begin{rem}
Trying to show their map $f$ (corresponding to the $T$ in the present
article) is a Picard operator, the authors in \cite{Alghamdi2012}
have set out to prove that $x_{n}\rightarrow x^{*}$ as $n\rightarrow\infty$
too. (Again, apart from denoting the operator differently, we haven't
really changed much of their notation so as to make a comparison fairly
self-explanatory.) The argument they have used in this follows from
a technique given in \cite{Pata2011}, and they, too, have ended up
stating, to quote a portion of the concerned reasoning in \cite{Alghamdi2012}
verbatim,
\[
d(x_{n},\, x^{*})=\lim_{p\rightarrow\infty}d(x_{n},\, x_{n+p}).
\]
This is where a problem arises. 

In \cite{Pata2011} the convergence of the Cauchy sequence $x_{n}$
is established directly to its limit $x_{*}$, and, therefore, the
utilization of an equality like
\[
d(x_{*},\, x_{n})=\lim_{m\rightarrow\infty}d(x_{n+m},\, x_{n})
\]
(quoted as it is from \cite{Pata2011} this time) is perfectly justified.
The cyclical setting in both this article and \cite{Alghamdi2012},
however, only ensures initially that $x_{n}\rightarrow y$ for some
$y\in\bigcap A_{i}$ as $n\rightarrow\infty$, and, as a consequence,
guarantees next the existence of a unique fixed point $x^{*}$ for
the operator. The fact that this $y$ turns out be a fixed point for
the operator as well (thereby rendering it equal to the unique $x^{*}$)
is something that needs to be \emph{actually proved} in a separate
treatment, which we believe is a task we've accomplished. \cite{Alghamdi2012},
though, overlooks this distinction and assumes the very fact (viz.
$x_{n}\rightarrow x^{*}$) it wants to prove in the proof itself,
committing the fallacy of \emph{petitio principii}.
\end{rem}
As a final note, let us remind ourselves of the fact that (\ref{eq:our-1st})
is weaker than (\ref{eq:kannan}) (see \cite{Chakraborty2012first}),
and, in light of this we also have, as a corollary to our \thmref{main}
the following:
\begin{cor}
\cite{rus2005cyclic,petric2010fixed} Let $\{A_{i}\}_{i=1}^{p}$ be
nonempty closed subsets of a complete metric space $X$. Suppose that
$T\colon\bigcup_{i=1}^{p}A_{i}\to\bigcup_{i=1}^{p}A_{i}$ is a cyclic
map, i.e. it satisfies $T(A_{i})\subset A_{i+1}$ for every $i\in\{1,\,\ldots\,,\, p\}$
(with $A_{p+1}=A_{1}$), such that
\[
d(Tx,\, Ty)\leq\frac{\alpha}{2}[d(x,\, Tx)+d(y,\, Ty)]
\]
for all $x\in A_{i}$, $y\in A_{i+1}$ ($1\leq i\leq p$), where $\alpha\in(0,\,1)$
is a constant. Then $T$ has a unique fixed point $x^{*}$ in $\bigcap_{i=1}^{p}A_{i}$
and is a Picard operator.
\end{cor}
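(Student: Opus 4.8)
The plan is to deduce the statement directly from Theorem \thmref{main} by showing that the cyclic Kannan condition is a special case of the cyclic Pata--Kannan inequality (\ref{eq:us}). Concretely, I would fix once and for all an admissible comparison function, say $\psi(\varepsilon)=\varepsilon$, take the Pata exponent (the constant called $\alpha$ in Theorem \thmref{main}, not to be confused with the Kannan constant $\alpha\in(0,1)$ of the present statement) equal to $1$, set $\beta=1$, and then exhibit a single constant $\Lambda\ge0$ for which the Kannan inequality forces (\ref{eq:us}) to hold on every consecutive pair $x\in A_{i}$, $y\in A_{i+1}$. This is precisely the cyclic counterpart of the already-recorded fact that (\ref{eq:kannan}) is stronger than (\ref{eq:our-1st}); the only difference is that the implication must be checked pointwise on each such pair, so that the cyclic structure is preserved.

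For the verification I would abbreviate $S:=\frac12[d(x,Tx)+d(y,Ty)]$, so that the hypothesis reads $d(Tx,Ty)\le\alpha S$ while the target (\ref{eq:us}) reads $d(Tx,Ty)\le(1-\varepsilon)S+\Lambda\varepsilon^{2}[1+\|x\|+\|Tx\|+\|y\|+\|Ty\|]$. I would then split on $\varepsilon$. When $\varepsilon\le 1-\alpha$ one has $(1-\varepsilon)S\ge\alpha S\ge d(Tx,Ty)$, so (\ref{eq:us}) holds with the Pata term to spare. When $\varepsilon>1-\alpha$ the excess is $d(Tx,Ty)-(1-\varepsilon)S\le(\alpha-1+\varepsilon)S\le\alpha S$, and here the triangle inequalities $d(x,Tx)\le\|x\|+\|Tx\|$ and $d(y,Ty)\le\|y\|+\|Ty\|$ give $S\le\frac12[1+\|x\|+\|Tx\|+\|y\|+\|Ty\|]$. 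Since on this range $\varepsilon^{2}\ge(1-\alpha)^{2}$, any $\Lambda\ge\frac{\alpha}{2(1-\alpha)^{2}}$ makes the Pata term dominate the excess; note that $1-\alpha>0$ because $\alpha<1$, which is exactly what keeps $\varepsilon$ bounded away from $0$ in this case and makes a uniform $\Lambda$ possible. Hence (\ref{eq:us}) holds for all $\varepsilon\in[0,1]$ on each consecutive pair.

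The main obstacle is precisely this second case: near $\varepsilon=1$ the ``contractive'' term $(1-\varepsilon)S$ collapses, and one must absorb the whole of $\alpha S$ into the Pata term uniformly in the pair $(x,y)$. This is possible only because $S$ is controlled by the norm bracket through the triangle inequality and because, on $\varepsilon>1-\alpha$, the factor $\varepsilon^{2}\psi(\varepsilon)$ is bounded below by a positive constant independent of the pair---both features stemming from the Kannan constant lying strictly inside $(0,1)$.

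With (\ref{eq:us}) established, Theorem \thmref{main} applies verbatim and yields a unique fixed point $x^{*}\in\bigcap_{i=1}^{p}A_{i}$. Finally, since $T$ now satisfies all the hypotheses of Theorem \thmref{main}, the preceding corollary of this section applies as well and shows that $T$ is a Picard operator, i.e. $\{T^{n}x\}$ converges to $x^{*}$ for every starting point $x\in\bigcup_{i=1}^{p}A_{i}$. This completes the plan.
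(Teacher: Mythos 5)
Your proposal is correct and follows essentially the same route as the paper: the paper simply invokes the fact (from \cite{Chakraborty2012first}) that the Kannan inequality (\ref{eq:kannan}) implies the Pata--Kannan inequality, and then applies Theorem \thmref{main} together with the Picard-operator corollary. You merely make that implication explicit (with $\psi(\varepsilon)=\varepsilon$, exponents $1$, and $\Lambda\ge\alpha/\bigl(2(1-\alpha)^{2}\bigr)$), and your case split and constants check out.
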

\bibliographystyle{amsplain}
\bibliography{references}

\end{document}